\documentclass{article}
\usepackage{maa-monthly}
\usepackage[final]
{showkeys}
\usepackage{graphicx}
\usepackage{enumerate}
\usepackage{multirow}
\usepackage[skip=40pt,font=large]{caption}
\usepackage{framed}

\usepackage[framemethod=tikz]{mdframed}

\usepackage{trimclip}

\usepackage{needspace}

\usepackage{esint}

\usepackage{hyperref}

\theoremstyle{theorem}
\newtheorem{theorem}{Theorem}

\newtheorem{corollary}[theorem]
{Corollary}

{Claim}

\theoremstyle{definition}

\newtheorem{example}
{Example}
\newtheorem{remark}
{Remark}

\theoremstyle{definition} 
\newtheorem*{remark*}{Remark}

\renewcommand{\skip}[1]{}

\newcommand{\N}{\mathbb{N}}

\newcommand{\E}{\operatorname{\mathsf{E}}}
\renewcommand{\P}{\operatorname{\mathsf{P}}}
\newcommand{\Var}{\operatorname{\mathsf{Var}}}

\newcommand{\1}{
{\mathbf{1}}}

\newcommand{\A}{\mathbf{U}}
\newcommand{\B}{\mathbf{V}}

\newcommand{\PP}{\mathcal{P}}
\newcommand{\AAA}{\mathfrak{A}}

\newcommand{\si}{\sigma}

\newcommand{\Om}{\Omega}

\newcommand\ep{\varepsilon}

\makeatletter
\DeclareRobustCommand{\lasymp}{\lg@asymp{<}}
\DeclareRobustCommand{\gasymp}{\lg@asymp{>}}

\newcommand{\under@asymp}[1]{\clipbox{0pt 0pt 0pt {0.5\height}}{$\m@th#1\asymp$}}
\newcommand{\lg@asymp}[1]{\mathrel{\mathpalette\lg@asymp@{#1}}}
\newcommand{\lg@asymp@}[2]{%
  \vcenter{%
    \offinterlineskip
    \m@th
    \ialign{%
      \hfil##\hfil\cr
      $#1#2$\cr
      \under@asymp{#1}\cr
    }%
  }%
}
\makeatother

\newcommand{\F}{\mathcal F}
\newcommand{\cc}{\mathsf c}

\makeatletter

\long\def\@maketablecaption#1#2{%
    \rule[.1\baselineskip]{0pt}{\baselineskip}
    \small\textbf{#1.}\enspace #2\strut
    \par
  \vskip2pt}
\makeatother  

\begin{document}

\title{Equi-dependence implying independence 
}
\markright{
\quad\quad\quad\quad\quad\ \  Equi-dependence implying independence }
\author{Iosif Pinelis \\ 
Michigan Technological University \\ 
Houghton, Michigan USA \\ 
Email: ipinelis@mtu.edu 
}


%
%
%
%
%

\maketitle

\begin{abstract} 
It is shown that, if an event $A$ has the same conditional probability in each trial in an infinite sequence of Bernoulli trials, then $A$ is independent of each trial. More general results are actually established. 

\noindent {\bf Keywords:} Conditional probability; independence. 
\end{abstract}








We have a probability space $(\Om,\F,\P)$ and independent events $B_1,B_2,\dots$ each of probability $p\in(0,1)$: 
\begin{equation}\label{eq:=p}
	\P(B_i)=p
\end{equation}
for all $i\in\N=\{1,2,\dots\}$. As usual, here the term \emph{event} means a member of the underlying $\sigma$-algebra $\F$. 

\begin{theorem}\label{th:1}
Suppose that $A$ is 
an event such that
\begin{equation}\label{eq:=q}
	\P(A|B_i)=q
\end{equation}
for some $q\in(0,1)$ and all $i\in\N$. 
Then $A$ is independent of $B_i$ for each $i\in\N$. 
\end{theorem}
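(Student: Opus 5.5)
The plan is to turn the hypothesis into a statement about a constant covariance between the indicator $\1_A$ and the centered indicators of the $B_i$. Then I will show that such a constant covariance must vanish, because the averages of the centered indicators of the $B_i$ tend to $0$ in $L^2$.

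\emph{Reformulation.} By \eqref{eq:=p} and \eqref{eq:=q}, $\P(A\cap B_i)=pq$ for all $i\in\N$. Set $X_i:=\1_{B_i}-p$. Then
\begin{equation*}
\E(\1_A X_i)=\P(A\cap B_i)-p\,\P(A)=p\,\bigl(q-\P(A)\bigr)=:c
\end{equation*}
for all $i$. The constant $c$ does not depend on $i$. It suffices to show $c=0$. Indeed, $c=0$ gives $\P(A)=q$, and hence $\P(A\cap B_i)=pq=\P(A)\P(B_i)$, which is the independence of $A$ and $B_i$ for each $i$.

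\emph{Averaging.} Let $Y_n:=\frac1n\sum_{i=1}^n X_i$. By linearity, $\E(\1_A Y_n)=c$ for every $n$. The events $B_i$ are independent, so the $X_i$ are pairwise uncorrelated, centered, and each has variance $p(1-p)$. Therefore $\E Y_n^2=p(1-p)/n$. By the Cauchy--Schwarz inequality,
\begin{equation*}
|c|=|\E(\1_A Y_n)|\le \sqrt{\P(A)}\,\sqrt{\E Y_n^2}\le\sqrt{p(1-p)/n}\to0
\end{equation*}
as $n\to\infty$. Hence $c=0$. Equivalently, one can apply Bessel's inequality to the orthogonal system $X_i/\sqrt{p(1-p)}$ in $L^2$: it gives $\sum_i c^2/(p(1-p))\le\P(A)<\infty$, which forces $c=0$.

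\emph{Main obstacle.} There is no real obstacle. The key point is that only pairwise uncorrelatedness of the $B_i$ is used, together with the infinitude of the sequence. With finitely many $B_i$ the statement fails, since the bound $\sqrt{p(1-p)/n}$ then only holds for the finitely many available $n$ and does not force $c=0$. The assumption $q\in(0,1)$ is not needed for this argument.
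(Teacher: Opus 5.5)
Your proof is correct and is essentially the paper's argument. The paper likewise averages the indicators into $M_n=\frac1n\sum_{i\in[n]}\1_{B_i}$ (your $Y_n$ is $M_n-p$), uses $\Var M_n=p(1-p)/n\to0$ to get $\E\1_A M_n\to p\,\P(A)$, and compares this with $\E\1_A M_n=pq$; the only cosmetic difference is that you apply Cauchy--Schwarz directly to $\E(\1_A Y_n)$, whereas the paper passes through $\E|M_n-p|\to0$, and your Bessel-inequality remark is an equivalent rephrasing of the same orthogonality.
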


\begin{remark}\label{rem:}
Note that the independence of $A$ and $B_i$ means that the conditional probability $\P(A|B_i)$ is the same as the unconditional probability $\P(A)$. So, the conclusion of Theorem~\ref{th:1} can be rewritten simply as $\P(A)=q$. 
\end{remark}

Under the conditions of Theorem~\ref{th:1}, the conclusion that $A$ is independent of the entire sequence $(B_1,B_2,\dots)$ of events would not hold in general. For a counterexample, suppose that $p=1/2$ and let $A:=(B_1\cap B_2)\cup(B_1^\cc\cap B_2^\cc)$, so that \eqref{eq:=q} holds for all $i\in\N$ with $q=1/2$; as usual, here $^\cc$ denotes the complement operation. 

On the other hand, condition \eqref{eq:=q} can be significantly relaxed, so that, in view of Remark~\ref{rem:}, we have the following generalization of Theorem~\ref{th:1}: 

\begin{theorem}\label{th:2}
Suppose that condition \eqref{eq:=p} holds for all $i\in\N$, and for some $q\in(0,1)$ we have 
\begin{equation*}
	\frac1n\,\sum_{i\in[n]} \P(A|B_i)\to q
\end{equation*}
as $n\to\infty$, where, as usual, $[n]:=\{1,\dots,n\}$. Then $\P(A)=q$. 
\end{theorem}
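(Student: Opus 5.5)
The plan is to express the averaged conditional probabilities as a single expectation involving the empirical frequency of the $B_i$, and then use the law of large numbers in $L^2$. By \eqref{eq:=p}, for each $i$ we have $\P(A|B_i)=\P(A\cap B_i)/p=\E(\1_A\1_{B_i})/p$. Writing $S_n:=\sum_{i\in[n]}\1_{B_i}$, this gives
\begin{equation*}
	\frac1n\,\sum_{i\in[n]} \P(A|B_i)=\frac1p\,\E\Big(\1_A\,\frac{S_n}n\Big).
\end{equation*}
So the goal reduces to showing that $\E(\1_A S_n/n)\to p\,\P(A)$ as $n\to\infty$. Comparing this limit with the assumed limit $q$ then yields $\P(A)=q$.

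Next, I would write
\begin{equation*}
	\E\Big(\1_A\,\frac{S_n}n\Big)-p\,\P(A)=\E\Big(\1_A\Big(\frac{S_n}n-p\Big)\Big)
\end{equation*}
and bound it by the Cauchy--Schwarz inequality:
\begin{equation*}
	\Big|\E\Big(\1_A\Big(\frac{S_n}n-p\Big)\Big)\Big|\le\sqrt{\P(A)}\,\sqrt{\E\Big(\frac{S_n}n-p\Big)^2}\le\sqrt{\Var\frac{S_n}n}.
\end{equation*}
The events $B_i$ are independent, hence pairwise independent, and each has probability $p$. Therefore the indicators $\1_{B_i}$ are uncorrelated with variance $p(1-p)$, so $\Var(S_n/n)=p(1-p)/n\to0$. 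Consequently $\frac1n\sum_{i\in[n]}\P(A|B_i)\to\P(A)$, and uniqueness of limits gives $\P(A)=q$.

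There is no serious obstacle here. The key observation is the first step: averaging conditional probabilities amounts to integrating $\1_A$ against $S_n/n$, which concentrates at the constant $p$. After that, the $L^2$ bound finishes the argument with no dependence on the structure of $A$. It is worth noting that only pairwise independence of the $B_i$ is used, and the restriction $q\in(0,1)$ plays no role in the proof. Theorem~\ref{th:1} then follows at once as the special case of a constant sequence, in view of Remark~\ref{rem:}.
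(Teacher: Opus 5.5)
Your proposal is correct and follows essentially the same route as the paper, which obtains Theorem~\ref{th:2} as the case $s=1$, $t=0$ of Theorem~\ref{th:3}: average the indicators into $M_n=S_n/n$, use $\Var M_n=p(1-p)/n\to0$ to get $\E\1_A M_n\to p\,\P(A)$, and identify $\E\1_A M_n=\frac pn\sum_{i\in[n]}\P(A|B_i)\to pq$. Your use of Cauchy--Schwarz in place of the paper's bound $|\E\1_A(M_n-p)|\le\E|M_n-p|\to0$ is only a cosmetic difference, and your remarks that pairwise independence suffices and that $q\in(0,1)$ is not needed are accurate.
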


Theorem~\ref{th:2} can be further generalized, as follows. For any 
subsets $K$ and $L$ of $\N$, let 
\begin{equation}\label{eq:B^{K,L}}
	B^{K,L}:=\Big(\bigcap_{k\in K}B_k\Big)\cap\Big(\bigcap_{l\in L}B_l^\cc\Big); 
\end{equation}
of course, the intersection of any empty family of events is 
$\Om$.  

For any two nonnegative integers $s$ and $t$ and any integer $n\ge s+t$, let 
\begin{equation*}
	\PP_{n,s,t}:=\Big\{(S,T)\colon S\in\binom{[n]}s,\ T\in\binom{[n]}t,\ S\cap T=\emptyset \Big\}
\end{equation*}
and 
\begin{equation*}
	N_{n,s,t}:=|\PP_{n,s,t}|=\binom n{s,t,n-s-t}=\frac{n!}{s!\,t!\,(n-s-t)!}, 
\end{equation*}
where $|\cdot|$ denotes the cardinality, 
and, as usual, for any set $E$, $\binom Ek$ denotes the set of all subsets of $E$ of cardinality $k$. 

\begin{theorem}\label{th:3}
Take any two nonnegative integers $s$ and $t$. Suppose that condition \eqref{eq:=p} holds for all $i\in\N$, and for some $q\in(0,1)$
\begin{equation}\label{eq:s,t,to q}
	\frac1{N_{n,s,t}}\,\sum_{(S,T)\in\PP_{n,s,t}}\P(A|B^{S,T})\to q
\end{equation}
as $n\to\infty$. 
Then $\P(A)=q$. 
\end{theorem}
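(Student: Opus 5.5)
The plan is to rewrite the average in \eqref{eq:s,t,to q} as a single expectation against a random weight, and then identify its limit using the law of large numbers together with dominated convergence.

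First, by the independence of the $B_i$ and \eqref{eq:=p}, for every $(S,T)\in\PP_{n,s,t}$ we have $\P(B^{S,T})=p^s(1-p)^t=:\pi>0$. This does not depend on $(S,T)$. Hence
\begin{equation*}
	\frac1{N_{n,s,t}}\sum_{(S,T)\in\PP_{n,s,t}}\P(A|B^{S,T})
	=\frac1\pi\,\E\big(\1_A\,Y_n\big),\qquad
	Y_n:=\frac1{N_{n,s,t}}\sum_{(S,T)\in\PP_{n,s,t}}\1_{B^{S,T}} .
\end{equation*}
For each fixed $\omega$, $Y_n(\omega)$ is the fraction of pairs $(S,T)\in\PP_{n,s,t}$ such that $S$ consists of indices $i\le n$ with $\omega\in B_i$ and $T$ consists of indices with $\omega\notin B_i$. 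In particular $0\le Y_n\le1$. Moreover, with $X_n:=\sum_{i\in[n]}\1_{B_i}$ we have the explicit formula
\begin{equation*}
	Y_n=\binom{X_n}{s}\binom{n-X_n}{t}\Big/\binom n{s,t,n-s-t}.
\end{equation*}

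Next, $X_n/n\to p$ almost surely by the strong law of large numbers, since the $\1_{B_i}$ are i.i.d.\ Bernoulli($p$). For fixed $s$ we have $\binom{X}{s}=X^s/s!+O(X^{s-1})$, and similarly for the other factors. Because $X_n\to\infty$ and $n-X_n\to\infty$ almost surely (as $0<p<1$), it follows that
\begin{equation*}
	Y_n=\frac{(X_n/n)^s\,(1-X_n/n)^t}{1}\,(1+o(1))\to p^s(1-p)^t=\pi
\end{equation*}
almost surely. Since $|\1_A Y_n|\le1$, dominated convergence gives $\E(\1_A Y_n)\to\pi\,\P(A)$. Therefore the average in \eqref{eq:s,t,to q} converges to $\P(A)$, and comparing with the hypothesis yields $\P(A)=q$.

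The only step needing care is the almost sure convergence $Y_n\to\pi$: the ratio of binomial and multinomial coefficients must be expanded correctly for fixed $s,t$ as $n\to\infty$. The uniform bound $Y_n\le1$ must also be checked, and it follows directly from the counting interpretation. The assumption $q\in(0,1)$ is not actually needed in this argument. Theorems~\ref{th:1} and \ref{th:2} are the special case $(s,t)=(1,0)$.
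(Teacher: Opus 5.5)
Your proof is correct, but it reaches the key step by a different route than the paper.

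Both arguments write the average in \eqref{eq:s,t,to q} as $r^{-1}\E(\1_A M_n)$, where $r=p^s(1-p)^t$ and $M_n$ (your $Y_n$) is the average of the indicators $\1_{B^{S,T}}$ over $\PP_{n,s,t}$. Both then need $\E(\1_A M_n)\to r\,\P(A)$.

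The paper gets this from $L^2$ convergence. It bounds $\Var M_n$ using two facts: the covariance of $\1_{B^{S,T}}$ and $\1_{B^{Q,R}}$ vanishes unless the index sets overlap, and two independent uniformly random index sets of sizes $u,v$ overlap with probability at most $uv/n$. Hence $\Var M_n=O(1/n)$.

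You instead use the exact counting identity $Y_n=\binom{X_n}{s}\binom{n-X_n}{t}/N_{n,s,t}$. This reduces everything to the ordinary law of large numbers for $X_n/n$, after which you apply bounded convergence.

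\begin{itemize}
\item Your route avoids the covariance bookkeeping and gives almost sure convergence.
\item The paper's route is more elementary (Chebyshev level, no strong law) and gives an explicit rate, hence a quantitative bound on $|\text{average}-\P(A)|$.
\item The paper's route also uses only that terms with disjoint index sets are uncorrelated, not the special combinatorial structure that makes $Y_n$ a function of $X_n$ alone. So it carries over to analogous averages built from more general independent variables, where no such closed form exists.
\end{itemize}

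You could replace the strong law by the weak law, since bounded convergence only needs $Y_n\to r$ in probability.
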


Theorem~\ref{th:2} is a special case of Theorem~\ref{th:3}, with $s=1$ and $t=0$.  

The following immediate corollary of Theorem~\ref{th:3} is a generalization of Theorem~\ref{th:1}. 

\begin{corollary}\label{cor:}
Take any two nonnegative integers $s$ and $t$. Let 
\begin{equation*}
	\PP_{\infty,s,t}:=\Big\{(S,T)\colon S\in\binom\N s,\ T\in\binom\N t,\ S\cap T=\emptyset \Big\}. 
\end{equation*} 
Suppose that condition \eqref{eq:=p} holds for all $i\in\N$ and 
\begin{equation}\label{eq:S,T,q}
	\P(A|B^{S,T})=q
\end{equation}
for some $q\in(0,1)$ and all $(S,T)\in\PP_{\infty,s,t}$.   
Then $A$ is independent of all events of the form $B^{S,T}$ for $(S,T)\in\PP_{\infty,s,t}$. 
\end{corollary}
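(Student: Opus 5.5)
The corollary follows from Theorem~\ref{th:3}. Then, as in Remark~\ref{rem:}, we convert the resulting equality $\P(A)=q$ into independence.

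First, if \eqref{eq:S,T,q} holds for all $(S,T)\in\PP_{\infty,s,t}$, then in particular it holds for all $(S,T)\in\PP_{n,s,t}$ for every $n\ge s+t$, since $\PP_{n,s,t}\subseteq\PP_{\infty,s,t}$. Hence each average in \eqref{eq:s,t,to q} is an average of $N_{n,s,t}$ copies of $q$, so it equals $q$ for every $n\ge s+t$. In particular it converges to $q$ as $n\to\infty$. Condition \eqref{eq:=p} is assumed, so Theorem~\ref{th:3} applies and gives $\P(A)=q$.

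Second, take any $(S,T)\in\PP_{\infty,s,t}$. The events $B_1,B_2,\dots$ are independent, $p\in(0,1)$, and $S$, $T$ are finite and disjoint. Therefore $\P(B^{S,T})=p^s(1-p)^t>0$, so the conditional probability in \eqref{eq:S,T,q} is well defined. Then \eqref{eq:S,T,q} together with $\P(A)=q$ gives
\begin{equation*}
	\P(A\cap B^{S,T})=q\,\P(B^{S,T})=\P(A)\,\P(B^{S,T}),
\end{equation*}
which is exactly the independence of $A$ and $B^{S,T}$.

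There is no real obstacle here: all the substance sits in Theorem~\ref{th:3}, which we take as given. The only points to check are the inclusion $\PP_{n,s,t}\subseteq\PP_{\infty,s,t}$, which lets the hypothesis transfer to the finite averages, and the positivity of $\P(B^{S,T})$. The degenerate case $s=t=0$ is consistent: then $B^{S,T}=\Om$, and the statement reduces to $\P(A)=q$.
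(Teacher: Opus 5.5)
Your proposal is correct and follows the paper's route. The paper calls this an immediate corollary of Theorem~\ref{th:3}: the averages in \eqref{eq:s,t,to q} are identically $q$, which gives $\P(A)=q$, and that is then read as independence via Remark~\ref{rem:}, which your check $\P(B^{S,T})=p^s(1-p)^t>0$ makes explicit.
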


These results, and especially Corollary~\ref{cor:}, are reminiscent of the Hewitt--Savage zero-one law (HSL) \cite{hewitt-savage}, where a certain condition on finite subsequences of an infinite sequence of independent random variables (r.v.'s) implies that the probability of a certain event is either $0$ or $1$ (so that this event is independent of all events). However, Corollary~\ref{cor:} differs significantly from the HSL. 

On the other hand, 
condition \eqref{eq:s,t,to q} permits certain perturbations of condition \eqref{eq:S,T,q}. 
Thus,  
Theorem~\ref{th:3} may be viewed as a statement of stability in Corollary~\ref{cor:}. Similarly, Theorem~\ref{th:2} may be viewed as a statement of stability in Theorem~\ref{th:1}. 


One may conjecture that, given the conditions of 
Corollary~\ref{cor:}, it is possible to conclude more: that the event $A$ is independent of the entire sequence $(B_i)_{i\in\N}$ of events. 
The following example, which generalizes the comment made after 
Remark~\ref{rem:}, provides a rather strong negation of this conjecture. 

\begin{example}\label{ex:}
Suppose that $p=1/2$. 
For any $k\in\N_0:=\N\cup\{0\}$, let $D_k$ denote the event that an even number of events $B_1,\dots,B_k$ occurs. Then for any $k\in\N$ 
\begin{equation*}
	\P(D_k)-\P(D_k^\cc)=\sum_{j=0}^k \binom kj (-1)^j \frac1{2^k}=0,
\end{equation*}
so that $\P(D_k)=1/2$. 

Take now any $n\in\N$ and let $A:=D_n$. 
For any assertion $\AAA$, let $\1(\AAA)$ denote the truth value of $\AAA$, so that $\1(\AAA)=1$ if $\AAA$ is true and $\1(\AAA)=0$ if $\AAA$ is false. 
For any nonnegative integers $s$ and $t$ such that $s+t<n$ and any $(S,T)\in\PP_{n,s,t}$,
\begin{multline*}
	\P(A\cap B^{S,T})=\P(B^{S,T})\big(\1(s\text{ is even})\,\P(D_{n-s-t})
	+\1(s\text{ is odd})\,\P(D_{n-s-t}^\cc)\big) \\ 
	=\Big(\frac12\Big)^{s+t}\,\frac12=\Big(\frac12\Big)^{s+t+1}. 
\end{multline*}
That is, $\P(A|B^{S,T})=1/2$;  
here it was essential that $s+t<n$, because $\P(D_0)=1$. 

Let us now extend the conclusion $\P(A|B^{S,T})=1/2$ for $(S,T)\in\PP_{n,s,t}$ to any $(S,T)\in\PP_{\infty,s,t}$.
For any nonnegative integers $s$ and $t$ such that $s+t<n$ and any $(S,T)\in\PP_{\infty,s,t}$, letting $I:=S\cap[n]$ and $J:=T\cap[n]$, and recalling the independence of the $B_i$'s, we have 
\begin{multline*}
	\P(A\cap B^{S,T})=\P(A\cap B^{I,J})\P(B^{S\setminus I,T\setminus J})
	=\Big(\frac12\Big)^{|I|+|J|+1}\Big(\frac12\Big)^{|S|-|I|+|T|-|J|} \\ 
	=\Big(\frac12\Big)^{|S|+|T|+1}=\frac12\,\P(B^{S,T}),
\end{multline*}
so that 
indeed $\P(A|B^{S,T})=1/2$---that is, condition \eqref{eq:S,T,q} holds (with $q=1/2$) for \emph{all} nonnegative integers $s$ and $t$ 
and 
all  $(S,T)\in\PP_{\infty,s,t}$. Recall that, in Corollary~\ref{cor:}, condition \eqref{eq:S,T,q} was assumed only for \emph{some} nonnegative integers $s$ and $t$ (and all $(S,T)\in\PP_{\infty,s,t}$). Note also that, by Corollary~\ref{cor:} or just because $A=D_n$, we have $\P(A)=1/2$.

However, event $A$ is of course not independent of the entire sequence $(B_i)_{i\in\N}$, because $A$ belongs to the $\si$-algebra generated by events $B_1,\dots,B_n$ and $\P(A)=1/2\notin\{0,1\}$. \qed 
\end{example}

In Example~\ref{ex:}, $p=q=1/2$. It is unclear if a counterexample to the mentioned conjecture exists for other values of $p$ and/or $q$. 

\bigskip


The following proofs of Theorems~\ref{th:1} and \ref{th:3} were suggested, in the main, by anonymous referees. The original proof of Theorem~\ref{th:3} was longer and more complicated. 
Of course, Theorem~\ref{th:1} is immediately implied by Theorem~\ref{th:3}. However, the proof of Theorem~\ref{th:1} is significantly simpler and shorter than that of Theorem~\ref{th:3}, and it already contains, in simpler forms, the essential ideas. Therefore, we begin the necessary proofs with the following. 

\begin{proof}[Proof of Theorem~\ref{th:1}] Let 
\begin{equation}\label{eq:M_n}
	M_n:=\frac1n\,\sum_{i\in[n]}\1_{B_i}, 
\end{equation}
where $\1_B$ denotes the indicator of an event $B$, so that $\1_B=1$ if $B$ occurs and $\1_B=0$ if $B$ does not occur. 
By \eqref{eq:=p}, $\E M_n = p$ and, by the independence, $\Var M_n = \break 
p(1-p)/n$. 
So, $\E|M_n-p|^2=\Var M_n\to0$ (as $n\to\infty$) and hence \break 
$\E|M_n-p|\to0$. It follows that  
$$|\E\1_A M_n-\E\1_A p|=|\E\1_A(M_n-p)|\le\E\1_A|M_n-p|
\le \E|M_n-p|\to0,$$ 
which implies  
$\E\1_A M_n\to \E\1_A p=\P(A)p$. 
On the other hand, by \eqref{eq:=p} and \eqref{eq:=q}, $\E\1_A M_n=pq$. 
Thus, $\P(A)p=pq$ and hence $\P(A)=q$. It remains to recall \break Remark~\ref{rem:}.
\end{proof}

\begin{proof}[Proof of Theorem~\ref{th:3}] In this more general setting, definition \eqref{eq:M_n} in the proof of Theorem~\ref{th:1} is naturally generalized as follows: 
\begin{equation*}
	M_n:=\frac1{N_{n,s,t}}\,\sum_{(S,T)\in\PP_{n,s,t}}\1_{B^{S,T}}. 
\end{equation*}
Then 
\begin{equation}\label{eq:EM}
	\E M_n=r:=p^s(1-p)^t,
\end{equation}
since, in view of \eqref{eq:B^{K,L}},  
\begin{equation}\label{eq:r}
	\E\1_{B^{S,T}}=\P(B^{S,T})=p^{|S|}(1-p)^{|T|}=p^s(1-p)^t  
\end{equation}
for all $(S,T)\in\PP_{n,s,t}$. Next, 
\begin{equation*}
	\Var M_n=\frac1{N_{n,s,t}^2}\,\sum_{(S,T)\in\PP_{n,s,t}}\sum_{(Q,R)\in\PP_{n,s,t}}
	c_{S,T;Q;R},
\end{equation*}
where $c_{S,T;Q;R}$ is the covariance of $\1_{B^{S,T}}$ and $\1_{B^{Q,R}}$. In view of the independence of the $B_i$'s, $c_{S,T;Q;R}=0$ if $(S\cup T)\cap(Q\cup R)=\emptyset$. So, $c_{S,T;Q;R}\ne0$ only if $S\cap Q\ne\emptyset$ or $T\cap R\ne\emptyset$ or $S\cap R\ne\emptyset$ or $T\cap Q\ne\emptyset$. 
Also, clearly $c_{S,T;Q;R}\le1$ for all $(S,T)$ and $(Q,R)$ in $\PP_{n,s,t}$. Therefore, 
\begin{equation}\label{eq:varN}
	\Var M_n\le\pi_{n,s,s}+\pi_{n,t,t}+\pi_{n,s,t}+\pi_{n,t,s},
\end{equation}
where 
\begin{equation*}
	\pi_{n,u,v}:=\P(\A\cap\B\ne\emptyset) 
\end{equation*}
and 
$\A$ and $\B$ are independent random subsets of $[n]$ 
such that $\A$ is picked uniformly at random from all subsets of $[n]$ with $u$ elements and $\B$ is picked uniformly at random from all subsets of $[n]$ with $v$ elements. 

Let us now bound $\pi_{n,u,v}$. 
By conditioning, we
may fix an arbitrary realization $U$ of $\A$, where $\A$ and $\B$ are as above. The chance of a single uniformly random element of $[n]$ to be in $U$ is $|U|/n=u/n$. So, by the union bound, the chance of at least one element of $\B$ to be in $U$ is at most $|\B|u/n=uv/n$. Thus, 
%
\begin{equation}\label{eq:le uv/n}
	\pi_{n,u,v}\le\frac{uv}n
\end{equation}
for any nonnegative integers $u$ and $v$. 

The conclusion of the proof of Theorem~\ref{th:3} is quite similar to that of Theorem~\ref{th:1}. 
It follows from \eqref{eq:EM}, \eqref{eq:varN}, and \eqref{eq:le uv/n} that $\E|M_n-r|^2=\Var M_n\to0$ and hence 
$$|\E\1_A M_n-\E\1_A r|=|\E\1_A(M_n-r)|\le\E\1_A|M_n-r|
\le \E|M_n-r|\to0,$$ 
so that 
\begin{equation*}
	\E\1_A M_n\to \P(A)r. 
\end{equation*}
On the other hand, by \eqref{eq:s,t,to q} and \eqref{eq:r}, 
\begin{equation*}
\E\1_A M_n=
	\frac1{N_{n,s,t}}\,\sum_{(S,T)\in\PP_{n,s,t}}\P(B^{S,T})\P(A|B^{S,T})\to rq. 
\end{equation*}
We conclude that $\P(A)r=rq$ and hence $\P(A)=q$. 
\end{proof}

\appendix

\section{Appendix} The following counterexample, with condition \eqref{eq:S,T,q} holding \emph{for all} nonnegative integers $s$ and $t$ with $s+t$ less than an arbitrarily given natural number $d$ but $A$ not independent of the entire sequence $(B_i)_{i\in\N}$, was kindly communicated to me by Qiyuan Gu. In this counterexample, $p$ and $q$ can be any numbers in the interval $(0,1)$. 

Let $X_1,X_2,\dots$ be independent Bernoulli($p$) random variables (r.v.'s) and let \break 
$B_i := \{X_i = 1\}$ for all $i$. Fix any integer $d \ge1$ and let $H := \prod_{j=1}^d (X_j - p)$. Let  also a r.v.\ $U$ be uniformly distributed over the interval $(0,1)$ and independent of the sequence $(B_i)_{i\in\N}$. 
Take any $\ep\in(0 ,\min(q, 1-q))$, and let $F := q + \ep H$ and $A := \{U \le F\}$. Since $|H| < 1$, we have $0 < F < 1$. Also $\E H = 0$, so $\P(A) = q$.

Now take any nonnegative integers $s,t$ with $s+t < d$ and any $(S,T)$ in $\mathcal P_{\infty,s,t}$. Since $|S \cup T| < d$, there is some $j\in[d]\setminus(S \cup T)$. By independence, $\E H 1_{B^{S,T}} = 0$, because the factor $X_j - p$ in $H$ has mean zero and $j\notin S \cup T$. Therefore, $\P(A \cap B^{S,T}) = \E F 1_{B^{S,T}} = q \P(B^{S,T})$, and hence $\P(A | B^{S,T}) = q$. Thus, condition \eqref{eq:S,T,q} holds for all nonnegative integers $s,t$ with $s+t < d$.

On the other hand, $A$ is not independent of the sequence $(B_i)$. Indeed, for $C := B_1 \cap \dots \cap B_d$, we have $H = (1-p)^d$ on $C$, and therefore $\P(A \cap C) - \P(A)\P(C)  = \ep p^d (1-p)^d  > 0$. \qed

What remains unclear is the existence of a similar example with $A$ in the $\sigma$-algebra generated by the sequence $(B_i)_{i\in\N}$.



%
\bibliography{C:/Users/ipinelis/Documents/pCloudSync/mtu_pCloud_02-02-17/bib_files/citations04-02-21}
%
%
\bibliographystyle{vancouver}

\nopagebreak
\nopagebreak

\end{document}